\theoremstyle{definition}
\newtheorem{definition}{Definition}
\newtheorem{theorem}{Theorem}
\newtheorem{corollary}{Corollary}
\newtheorem{lemma}{Lemma}
\newtheorem{proposition}{Propostion}
\newtheorem{remark}{Remark}
\newtheorem*{problem formulation}{Problem formulation}
\begin{document}
\title{Robustness against Disturbances in Power Systems under Frequency Constraints}
\author{Dongchan Lee, Liviu Aolaritei, Thanh Long Vu and Konstantin~Turitsyn
\thanks{D. Lee, L. Vu and K. Turitsyn are with the Department of Mechanical Engineering, Massachusetts Institute of Technology, Cambridge, MA 02139, USA (email: dclee@mit.edu; longvu@mit.edu; turitsyn@mit.edu). 

L. Aolaritei is with the Automatic Control Laboratory, ETH Zurich,
Zurich 8092, Switzerland (e-mail:aliviu@ethz.ch). 

The work was supported by funding from the U.S. Department of Energys Office of Electricity as part of the DOE Grid Modernization Initiative.}
}

\maketitle

\begin{abstract}
The wide deployment of renewable generation and the gradual decrease in the overall system inertia make modern power grids more vulnerable to transient instabilities and unacceptable frequency fluctuations. Time-domain simulation-based assessment of the system robustness against uncertain and stochastic disturbances is extremely time-consuming. {\color{black} In this paper, we develop an alternative approach, which has its roots in the input-output stability analysis for Lur'e systems. Our approach consists of a mathematically rigorous characterization of the external disturbances that the power system is transiently stable and the frequency constraints are not violated.} The derived certificate is efficiently constructed via convex optimization and is shown to be non-conservative for different IEEE test cases. 
\end{abstract}

\begin{IEEEkeywords}
Input-output stability, small-gain analysis, constrained input constrained output stability, sector-bound nonlinearity, transient stability, frequency constraints.
\end{IEEEkeywords}

\IEEEpeerreviewmaketitle

\section{Introduction}
Transient stability assessment is one of the most computationally challenging security assessment procedures carried out by the system operators \cite{kundur94, pavella12, machowski11}. In addition to transient stability, the operators are required to maintain the system frequency close to the nominal values of $50$ or $60$ Hz \cite{kundur04}. The grid is equipped with under-frequency load shedding (UFLS) relays, as well as under-frequency and over-frequency generation protection relays to ensure that the frequency regulation is met \cite{miller11}. Traditionally, the frequency deviations during faults were suppressed by the turbine speed governors and by the natural inertia of the generators. However, in recent years, the primary frequency response capabilities have steadily declined in many power grids, e.g., the Eastern Interconnection \cite{ingleson10} in the US. This decline in response results in deeper frequency nadir, which, in turn, increases the risk of unintended disconnection of units and cascading outages.

{\color{black} In addition to the decrease in the power system inertia, the source of disturbance has significantly increased, due to the higher penetration of renewables and distributed generators.} Typical disturbances could include nearly-instant switching events, such as load shedding and generation tripping, or continuous changes, such as varying power output from wind turbines \cite{miller11, bernal16}. One of the most common causes of frequency rise is the near simultaneous tripping of more than one generators. As a consequence, it is very important to be able to efficiently quantify the critical disturbance levels that the grid can withstand at any given operating condition.

{\color{black} In recent years, there have been many efforts to assess the transient stability of power systems under operational (e.g., frequency) constraints. These studies can be divided into three main groups. The first group proposes numerical simulations under stochastic disturbances, where the output trajectory is computed for a given realization of the disturbance \cite{dong12,milano13,Papadopoulos2017}. Time-domain simulations yield high fidelity assessments when the disturbance and the operating conditions are known exactly. However, when there is limited information about the disturbance, the assessment may require large number of simulations. The second group is based on reachability analysis, where the output trajectories are bounded inside the reachable set \cite{Chen2012,Choi2017,lee2017robust,Althoff2014,zhang18}. While some of these formulations allow differential-algebraic equations to model the power grid dynamics, they rely on the approximation of the dynamics via linearization or Taylor-series expansion \cite{Chen2012,Choi2017}. The works in \cite{Choi2017,lee2017robust} give tight time-dependent bounds on the output, but they require solving an optimization problem at every time step. The third and final group is based on Input-to-State Stability (ISS) \cite{Aolaritei2018,WEITENBERG20181} analysis. ISS provides a powerful rigorous approach to tackle such a problem, however finding a Lyapunov function that renders this approach non-conservative is in general very difficult.}

In this paper, we propose a tractable method for finding the bound on the maximum magnitude of the disturbance that the grid can withstand without violating the frequency constraints. This will allow system operators to certify that the grid is robust against an entire class of magnitude-bounded disturbances. The disturbances are only characterized by their magnitude, and therefore instant step changes such as switching or tripping are also considered in the proposed analysis. 

The methodology proposed to solve this problem builds on the input-output stability analysis \cite{Sontag1999} for nonlinear systems, which we specialize to systems written in a Lur'e form representation \cite{khalil96}. A Lur'e system is a linear dynamical system with a nonlinear static state feedback, where the nonlinearity is sector bounded by two linear functions \cite{kokotovic01,zames66,vu16_lyap}. The Lur'e system representation with local sector bounded nonlinearity has been recently applied to power systems for finding the region of attraction \cite{vu16_lyap, Aolaritei2018}. In our formulation, the power system is seen as an input-output map from the disturbance to the frequency of the generators. Small-gain arguments are then used to assess the input-output stability of the system under output constraints.

{\color{black} The main contributions of our paper are as follows. First, we define the notions of Constrained Input Bounded Output (CIBO) stability and Constrained Input Constrained Output (CICO) stability. These definitions extend the well-known Bounded Input Bounded Output (BIBO) stability notion to consider constraints on both the input and the output. The term CICO stability has also appeared in the context of filter design in \cite{Lev-Ari1991}. Second, we provide a certificate on the disturbance magnitude such that the resulting generator frequencies are constrained within some operational limits provided by the system operators. Our result guarantees that the system is robust against all possible realizations of magnitude-bounded disturbances. Third, we show that finding the maximum disturbance magnitude can be solved via convex optimization when the generator angle separation constraint is imposed. The ability to quickly and efficiently assess the potential impact of disturbance provides a significant advantage to our method in the real-time operation of power grids compared to the other approaches in the literature. }

The rest of the paper is organized as follows. In Chapter II, we present the system model, together with it's Lur'e representation, and we mathematically formulate the problem. In Section III, we define three notions of input-output stability, for which we present sufficient conditions in Section IV. In Section V, we build on the stability analysis previously developed to formulate a convex optimization problem of finding the maximum magnitude of the admissible disturbance. The results are numerically validated in Section V on the IEEE 9-bus and 39-bus test cases. Finally, Section VI concludes the paper.

\section{System Model and Problem Formulation}

The power grid is represented as an undirected graph $\mathcal{A}(\mathcal{N},\mathcal{E})$, where $\mathcal{N}=\{1,2,...,n\}$ is the set of buses, and $\mathcal{E}\subseteq\mathcal{N}\times\mathcal{N}$ is the set of transmission lines connecting the buses. Let $\ell=|\mathcal{E}|$. The indices $\mathcal{G} = \{1,...,m\}$ denote the generators, and $\mathcal{L} = \{m+1,...,m+n\}$ denote the loads. {\color{black} 
Let $E\in\mathbf{R}^{n\times \ell}$ denote the incidence matrix of the graph}. Moreover, let $\mathbf{0}$ and $I$ denote the zero matrix and the identity matrix of appropriate dimensions, respectively. Finally, given a matrix $A\in \mathbf{R}^{n \times n}$, its spectral radius is denoted by $\rho(A)$.

\subsection{Power System Model}

The structure-preserving second-order swing equation is used to model the power system dynamics:
\begin{equation}
\begin{aligned}
M_k\ddot{\delta}_k+D_k\dot{\delta}_k+\sum_{(k,j)\in\mathcal{E}}\phi_{kj}\sin(\delta_{kj})&=p_{k}, &\forall k\in\mathcal{G} \\
D_k\dot{\delta}_k+\sum_{(k,j)\in\mathcal{E}}\phi_{kj}\sin(\delta_{kj})&=P_{L,k}, &\forall k\in\mathcal{L}
\end{aligned}
\label{eqn_swing}
\end{equation}
where $M_k$ and $D_k$ are the inertia and damping coefficients of the generator $k$, respectively. $p_k$ and $P_{L,k}$ are the mechanical power at generator $k$ and load $k$, respectively. Moreover, $\phi_{kj}=b_{kj} V_k V_j$, where $b_{kj}$ is the susceptance of the transmission line $(k,j)$, and $V_k$ is the voltage magnitude at bus $k$, which we assume constant. Finally, $\delta_{kj}$ denotes the phase difference between bus $k$ and bus $j$, i.e., $ \delta_{kj}=\delta_k-\delta_j$.

{\color{black} In addition to the grid dynamics, we consider the turbine governor dynamics, which introduce delay in the primary frequency control response. The delayed response often leads to greater excursion from the nominal grid frequency. This effect is captured by the following first order turbine governor model:}
\begin{equation}
T_k\dot{p}_k+p_k+\frac{1}{R_k}\dot{\delta}_k=P_{G,k}, \ k\in\mathcal{G},
\label{eqn_governor}
\end{equation}
where $P_{G,k}$ is the scheduled power injection at bus $k$, $T_k$ is the governor time constant, and $R_k$ is the droop coefficient.
To write the system model \eqref{eqn_swing} and \eqref{eqn_governor} in vector form, the following notation is introduced. Let $\delta_G$ and $\delta_L$ be the vectors obtained by stacking the scalars $\delta_k$, for $k \in \mathcal{G}$, and $\delta_k$, for $k \in \mathcal{L}$, respectively. Moreover, let $\delta =\begin{bmatrix} \delta_G^T & \delta_L^T\end{bmatrix}^T$. Similarly, let $p$, $P_G$ and $P_L$ be the vectors obtained by stacking the scalars $p_k$, $P_{G,k}$, for $k \in \mathcal{G}$, and $P_{L,k}$ for $k \in \mathcal{L}$, respectively. 
{\color{black} Let $M$, $D_G$, $D_L$ and $\Phi$ be the diagonal matrices containing the elements $M_k$, $D_k$, for $k \in \mathcal{G}$, $D_k$, for $k \in \mathcal{L}$, and $\Phi_{kj}$, for $(k,j) \in \mathcal{E}$, on their diagonal, respectively.} Finally, let $E=\begin{bmatrix} E_G^T & E_L^T\end{bmatrix}^T$, where the subscripts $G$ and $L$ correspond to the generator and load buses, respectively.

Consider now the disturbance vector $u=\begin{bmatrix} u_G^T & u_L^T\end{bmatrix}^T.$ The system model \eqref{eqn_swing} and \eqref{eqn_governor} can be rewritten in the following form: 
\begin{equation}
\begin{aligned}
M\ddot{\delta}_G+D_G\dot{\delta}_G+E_G \Phi\sin(E^T\delta)&=p \\
D_L\dot{\delta}_L+E_L \Phi\sin(E^T\delta)&=P_L+u_L \\
T\dot{p}+p+R^{-1}\dot{\delta}_G&=P_G+u_G.
\end{aligned}
\label{eqn_swing_vector}
\end{equation}
This simple formulation of the disturbance could incorporate a rich variety of uncertainty scenarios, such as load shedding, generation tripping, and stochastic fluctuations in the power output from wind turbines.

\subsection{Lur'e System Representation}

In the following, the system \eqref{eqn_swing_vector} will be rewritten as a Lur'e system, i.e., as an interconnection of a linear dynamical system with a nonlinear static state feedback. As it will be shown in this paper, the Lur'e system, together with the efficient bounding of the nonlinearity between linear functions, heavily simplifies the analysis of the nonlinear power systems.

The system model \eqref{eqn_swing_vector} can be written in a state-space representation. For $u_L=\mathbf{0}$ and $u_G=\mathbf{0}$, let $\delta^*$ and $\dot{\delta}=\mathbf{0}$ represent the equilibrium point of \eqref{eqn_swing_vector}, with generator power injection $p^*$. Then, we define the state of the system as $x = \begin{bmatrix} x_1^T & x_2^T & x_3^T & x_4^T\end{bmatrix}^T$, with $x_1 = \delta_G - \delta_G^*$, $x_2 = \dot{\delta}_G$, $x_3 = \delta_L - \delta_L^*$, and $x_4 = p - p^*$.

Now let $z=E^T\delta-E^T\delta^*$ be the phase difference on each transmission line subtracted by its equilibrium, and $y$ be the vector containing the frequencies of the generators $y=\dot{\delta}_G$. Finally, let $\varphi^* = E^T\delta^*$, and  $v=\sin(\varphi^*+z)-\sin(\varphi^*)-\text{diag}(\cos(\varphi^*))z$.
With these new variables, the system \eqref{eqn_swing_vector} can be written in the Lur'e form $\dot{x}=Ax+B_v v+B_u u$ as follows:
\begin{equation}
\begin{aligned}
\dot{x}&=\begin{bmatrix} \mathbf{0} & I & \mathbf{0} & \mathbf{0} \\ A_{21} & -M^{-1}D_G & A_{23} & M^{-1} \\ A_{31} & \mathbf{0} & A_{33} & \mathbf{0} \\ \mathbf{0} & -R^{-1}T^{-1} & \mathbf{0} & -T^{-1} \end{bmatrix}x \\
& \hskip 3em +\begin{bmatrix} \mathbf{0} \\ -M^{-1} E_G \Phi \\ -D_L^{-1} E_L \Phi \\ \mathbf{0} \end{bmatrix}v +\begin{bmatrix} \mathbf{0} & \mathbf{0} \\ \mathbf{0} & \mathbf{0} \\ \mathbf{0} & D_L^{-1} \\ T^{-1} & \mathbf{0} \end{bmatrix}u
\end{aligned}
\label{eqn_lure}
\end{equation}
with
$$\begin{aligned}
A_{21}&=-M^{-1}E_G \Phi\text{diag}(\cos\varphi^*)E_G^T \\
A_{23}&=-M^{-1}E_G \Phi\text{diag}(\cos\varphi^*)E_L^T \\
A_{31}&=-D_L^{-1}E_L \Phi\text{diag}(\cos\varphi^*)E_G^T \\
A_{33}&=-D_L^{-1}E_L \Phi\text{diag}(\cos\varphi^*)E_L^T.
\end{aligned}$$

The complete model can be compactly written as
\begin{subequations}
\begin{align}
\dot{x}&=Ax+B_v v+B_u u \label{eqn_fullmodel_x} \\
v&=\sin(\varphi^*+z)-\sin\varphi^*-\text{diag}(\cos\varphi^*)z \label{eqn_fullmodel_v} \\
y&=\begin{bmatrix}\mathbf{0} & I & \mathbf{0} & \mathbf{0}\end{bmatrix} x=C_yx \label{eqn_fullmodel_y} \\
z&=\begin{bmatrix}E_G^T & \mathbf{0} & E_L^T & \mathbf{0}\end{bmatrix}x=C_zx \label{eqn_fullmodel_w}.
\end{align}
\label{eqn_fullmodel}
\end{subequations}
The matrix $A$ in \eqref{eqn_lure} was obtained by linearization of the system \eqref{eqn_swing_vector} around the equilibrium point $x=\mathbf{0}$. The vector $v$ represents the static nonlinear feedback of the state $x$, i.e., $v=\psi(z)=\psi(C_z x)$.

\begin{figure}[tb]
	\centering
	\includegraphics[width=1.5in]{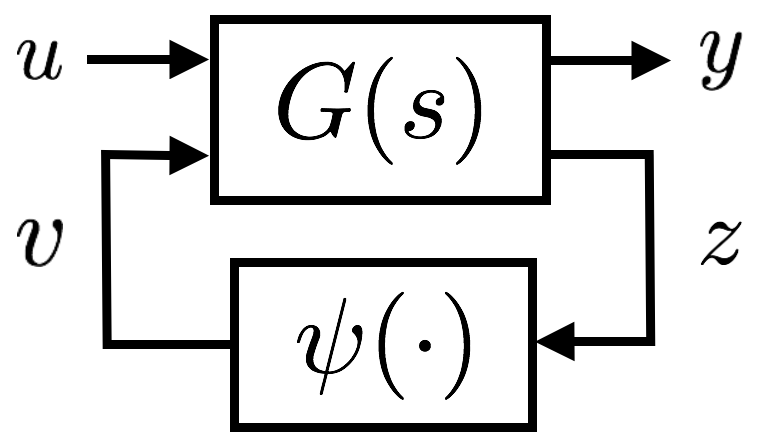}
	\caption{Lur'e system representation of the power system dynamics in $G(s)$ and the nonlinear components in $\psi(\cdot)$.}
	\label{fig_system_w_uncertainty}
\end{figure}

Let the transfer function matrix $G(s)$ represent the linear dynamics in Laplace domain. Then the Lur'e system \eqref{eqn_lure} can be graphically represented as in Figure \ref{fig_system_w_uncertainty}. Following this representation of the system, the transfer function matrix $G$ can be divided into four blocks:
\begin{equation}
G(s)=\begin{bmatrix} G_{y,u}(s) & G_{y,v}(s) \\ G_{z,u}(s) & G_{z,v}(s) \end{bmatrix}
\label{eqn_G}
\end{equation}
where each block of transfer matrix can be computed by $G_{i,j}(s)=C_i(sI-A)^{-1}B_j$, with $i \in \{y,z\}$ and $j \in \{u,v\}$. {\color{black} This representation of the system implies that the initial condition of the system is at the equilibrium (i.e. $x_0=\mathbf{0}$)}. Given the system model described in this section, the problem can be formulated as follows. 



\subsection{Problem Formulation}
\label{problemformulation}
Consider the power system model \eqref{eqn_fullmodel}, containing the additive magnitude-bounded disturbance $u$. The analysis carried out in this paper concentrates on finding the maximum bound on the magnitude of the disturbance such that the generators remain synchronized, and some imposed constraints on the frequencies of the generators are never violated. 

In order to quantify the magnitude of the disturbance $u$, we propose the following element-wise infinity norm.

\begin{definition}
\label{def:infinitynorm}
Let $u(t)\in\mathbf{R}^n$. Its element-wise $\mathcal{L}$-infinity norm, which we denote by $\lvert u \rvert_{\mathcal{L}_{\infty}^n}\in\mathbf{R}^n$, is defined as
\begin{equation}
\big[\lvert u \rvert_{\mathcal{L}_{\infty}^n}\big]_i=\sup_{t\geq0}|u_i(t)|
\end{equation}
where $\big[\lvert u \rvert_{\mathcal{L}_{\infty}^n}\big]_i$ and $u_i$ are the $i$-th entries of $\lvert u \rvert_{\mathcal{L}_{\infty}^n}$ and $u$, respectively.
\end{definition}

\begin{remark}
{\color{black} The element-wise $\mathcal{L}$-infinity {\color{black} generalizes} the standard $\mathcal{L}_\infty$ norm of $u$, defined as $\lVert u\rVert_{\mathcal{L}_{\infty}}=\max_i (\sup_{t\geq0}|u_i(t)|)$. The proposed element-wise norm allows us to represent different magnitudes of disturbance at the individual buses, rather than bounding them uniformly. This fact will be exploited in Section \ref{sec:optimization}, where an optimization problem will be formulated to compute the maximum magnitude of the admissible disturbance entering at each bus. To avoid any confusion, we denoted the element-wise $\mathcal{L}$-infinity norm of an $n$-dimensional signal by $\mathcal{L}_{\infty}^n$, where the superscript $n$ should remind the reader that $\lvert \cdot \rvert_{\mathcal{L}_{\infty}^n}$ is an $n$-dimensional vector.}
\end{remark}

The problem can be now mathematically formulated as follows.

\begin{problem formulation} {\color{black} Consider the power system \eqref{eqn_swing} written in the Lur'e form \eqref{eqn_fullmodel}, with initial condition $x_0=\mathbf{0}$. The objective of our problem is to find the maximum bound $\bar{u} \in \mathbf{R}^n$ on the disturbance such that if $\lvert u \rvert_{\mathcal{L}_{\infty}^n} \leq \bar{u}$, the following two conditions hold:
\begin{itemize}
    \item[(i)] $\exists \; \bar{z}$  such that $\lvert z \rvert_{\mathcal{L}_{\infty}^{\ell}} \leq \bar{z}$
    \item[(ii)] $\lvert y \rvert_{\mathcal{L}_{\infty}^m} \leq \bar{y}$.
\end{itemize}}

The first condition, which introduces a constraint $\bar{z}$ on the difference in angles between adjacent buses, prevents the angular separation of the generators, and therefore ensures that the generators remain synchronized during the transient dynamics. The second condition ensures that the frequency constraints, defined by $\bar{y}$, are not violated.

\end{problem formulation}

{\color{black}
\section{Definitions and Preliminaries}

Let $y = H u$ define an input-output relation, where $H$ is an operator that specifies the output $y$ in terms of the input $u$. In the following we will introduce three types of input-output stability notions for the operator $H$ with respect to the element-wise infinity norm $\lvert \cdot \rvert_{\mathcal{L}_{\infty}^{\cdot}}$. The first type of input-output stability is the Bounded Input Bounded Output stability, which is defined as follows.

\begin{definition}[\textbf{Bounded Input Bounded Output}]
\label{BIBO}
The operator $H$ is BIBO stable if for every input constraint $\bar{u}$, if $\lvert u \rvert_{\mathcal{L}_{\infty}^n}\leq \bar{u}$, then the output $\lvert y \rvert_{\mathcal{L}_{\infty}^m}$ is bounded.
\end{definition}

 Notice that it is not always possible to meet such a condition, especially for nonlinear systems, such as the power grid. We now define the second type of input-output stability, namely the Constrained Input Bounded Output (CIBO) stability.

\begin{definition}{\textbf{(Constrained Input Bounded Output)}}
\label{CIBO}
The operator $H$ is CIBO stable if there exists an input constraint $\bar{u}$, such that for every input $u$ with $\lvert u \rvert_{\mathcal{L}_{\infty}^n}\leq \bar{u}$, the output $\lvert y \rvert_{\mathcal{L}_{\infty}^m}$ is bounded.
\end{definition}

Recall from Section \ref{problemformulation} that we want to find the maximum bound on the magnitude of the disturbance such that the generators remain synchronized, and the constraints on the frequencies of the generators are never violated. We formalize this concept into the third and last type of input-output stability, Constrained Input Constrained Output (CICO) stability.

\begin{definition}{\textbf{(Constrained Input Constrained Output)}}
\label{CICO}
The operator $H$ is CICO stable if given an output constraint $\bar{y}$, there exists an input constraint $\bar{u}$, such that for every input with $u$ with $\lvert u \rvert_{\mathcal{L}_{\infty}^n}\leq \bar{u}$, the output satisfies $\lvert y \rvert_{\mathcal{L}_{\infty}^m}\leq\bar{y}$.
\end{definition}

In Section \ref{sec:inputoutput} we will propose conditions under which the system \eqref{eqn_fullmodel} is BIBO, CIBO, and CICO stable.

\begin{remark}
{\color{black} Under normal operating conditions, the linear dynamics $G(s)$ in the Lur'e system \eqref{eqn_fullmodel} is BIBO stable. Notice that any uniform shift in the angles $\delta$ defines another equilibrium point, and therefore the matrix $A$ cannot be Hurwitz, but only marginally stable. However, it can be shown that the eigenvalues of $A$ with zero real part do not appear in $G(s)$ due to pole-zero cancellation. The cancellation occurs because the angles do not appear in the output of the linear system, but only the angle differences. In the power system literature, this is known as small-signal stability, which is a necessary condition for the system to be transiently stable.}
\end{remark}

Let $y = H u$ be a BIBO stable system. We define its gain to be a non-negative constant matrix  $\gamma_H\in\mathbf{R}^{m\times n}$ such that
\begin{equation}
     \lvert y \rvert_{\mathcal{L}_{\infty}^m}\leq \gamma_H \lvert u \rvert_{\mathcal{L}_{\infty}^n}.
\end{equation}

When the input is bounded, i.e., $\lvert u \rvert_{\mathcal{L}_{\infty}^n}\leq \bar{u}$, we denote the gain matrix by $\gamma_H(\bar{u})$ to remind that it is a function of the domain parametrized by $\bar{u}$.

For a BIBO stable linear system, where the operator $H$ corresponds to the transfer function $G(s)$ in equation \eqref{eqn_G}, the gain matrix $\gamma_G$ can be computed using the following lemma.

\begin{lemma}
Given a BIBO stable linear system with transfer function $G(s)$, the $ij$ element of the gain matrix $\gamma_G$ can be computed as
\begin{equation}
\gamma_{G,ij}={\color{black}\lVert G_{ij}\rVert_{\mathcal{L}_1}}
\label{eqn_gain1}
\end{equation}
with $\lVert G_{ij}\rVert_{\mathcal{L}_1}=\int_{-\infty}^{\infty}|h_{ij}(\tau)|d\tau$, where $h_{ij}$ is the impulse response of $G_{ij}$.
\label{lemma_BIBO}
\end{lemma}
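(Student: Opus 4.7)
The plan is to establish the bound by rewriting the input-output map as a convolution, applying the triangle inequality for integrals, and then exhibiting an input that attains the bound to conclude that $\lVert G_{ij}\rVert_{\mathcal{L}_1}$ is indeed (the entries of) the gain matrix and not merely an admissible upper bound.

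First, since $G(s)$ is BIBO stable and linear, the input-output relation in the time domain is the convolution
\begin{equation*}
y_i(t)=\sum_{j=1}^{n}\int_{0}^{t}h_{ij}(t-\tau)\,u_j(\tau)\,d\tau, \qquad i=1,\dots,m,
\end{equation*}
where $h_{ij}$ is the impulse response of $G_{ij}$ (causal, so $h_{ij}(\tau)=0$ for $\tau<0$, which lets us harmlessly extend the integration limits to $(-\infty,\infty)$). Applying the triangle inequality to the integral and the sum, bounding $|u_j(\tau)|$ by $[\lvert u\rvert_{\mathcal{L}_\infty^n}]_j$, and then taking the supremum over $t\geq 0$, I obtain
\begin{equation*}
\bigl[\lvert y\rvert_{\mathcal{L}_\infty^m}\bigr]_i \;\leq\; \sum_{j=1}^{n}\lVert G_{ij}\rVert_{\mathcal{L}_1}\,\bigl[\lvert u\rvert_{\mathcal{L}_\infty^n}\bigr]_j,
\end{equation*}
which is exactly the matrix inequality $\lvert y\rvert_{\mathcal{L}_\infty^m}\leq \gamma_G\lvert u\rvert_{\mathcal{L}_\infty^n}$ with $\gamma_{G,ij}=\lVert G_{ij}\rVert_{\mathcal{L}_1}$. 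This establishes the inequality direction.

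To show that this choice is tight (i.e.\ cannot be improved to a smaller non-negative matrix while still being valid for all admissible $u$), I construct a worst-case input channel by channel. Fix $i,j$ and choose any $\bar{u}_j>0$. For an arbitrary $t^{\star}\geq 0$, define $u_j(\tau)=\bar{u}_j\,\mathrm{sign}\bigl(h_{ij}(t^{\star}-\tau)\bigr)$ and set the other inputs identically zero. Then $|u_j|_{\mathcal{L}_\infty}=\bar{u}_j$ and $y_i(t^{\star})=\bar{u}_j\int_{0}^{t^{\star}}|h_{ij}(t^{\star}-\tau)|\,d\tau$. Letting $t^{\star}\to\infty$ drives $y_i(t^{\star})\to \bar{u}_j\,\lVert G_{ij}\rVert_{\mathcal{L}_1}$, so no smaller constant than $\lVert G_{ij}\rVert_{\mathcal{L}_1}$ can serve as the $(i,j)$-entry.

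The main obstacle I foresee is purely technical: the worst-case input above is bounded but generally discontinuous (a sign function), so strictly speaking the supremum is approached rather than attained, and one has to justify that for causal BIBO systems the supremum of $|y_i(t)|$ over $t\geq 0$ really tends to $\bar{u}_j\lVert G_{ij}\rVert_{\mathcal{L}_1}$. This is handled by a standard density/approximation argument (smooth $u_j$ with a mollified sign function, then pass to the limit using dominated convergence with the $\mathcal{L}_1$ integrability of $h_{ij}$, which follows from BIBO stability). Apart from this, the argument is a direct application of Young's/Hölder's inequality for convolutions and requires no additional machinery beyond what is already introduced in the paper.
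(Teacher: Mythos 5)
Your proof is correct and its core — writing the output as a convolution, applying the triangle inequality, and bounding $|u_j|$ by its sup to get $\gamma_{G,ij}=\lVert G_{ij}\rVert_{\mathcal{L}_1}$ — is exactly the paper's (one-line) argument. The additional tightness construction with the sign-of-reversed-impulse-response input is valid but goes beyond what the lemma requires, since the paper defines the gain matrix merely as any non-negative matrix satisfying $\lvert y\rvert_{\mathcal{L}_\infty^m}\leq\gamma_G\lvert u\rvert_{\mathcal{L}_\infty^n}$, so no optimality claim needs to be proved.
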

\begin{proof}
For the $i$-th element of the output vector,
{\color{black}
$$\begin{aligned}
|y_i(t)|\leq \sum_j\bar{u}_j\int_{-\infty}^{\infty}|h_{ij}(\tau)|d\tau=\sum_j\lVert G_{ij}\rVert_{\mathcal{L}_1}\bar{u}_j.
\end{aligned}$$}
\end{proof}

The matrix $\gamma_G$, can be divided, according to \eqref{eqn_G}, into
\begin{equation}
\gamma_G=\begin{bmatrix}
        \gamma_{y,u} & \gamma_{y,v} \\
        \gamma_{z,u} & \gamma_{z,v} \\
     \end{bmatrix},
\end{equation}
 where $\gamma_{y,u}\in\mathbf{R}^{m\times n}$, $\gamma_{y,v}\in\mathbf{R}^{m\times \ell}$, $\gamma_{z,u}\in\mathbf{R}^{\ell\times n}$, and $\gamma_{z,v}\in\mathbf{R}^{\ell\times \ell}$ are the gain matrices computed as shown in Lemma \ref{lemma_BIBO}.

Consider now the nonlinear component, given by $v = \psi(z)$. Since it is decentralized, i.e., $v_i = \psi_i(z_i)$ $\forall i\in \{1,\ldots,\ell\}$, the gain matrix $\gamma_\psi$ is a diagonal matrix. The diagonal element of $\gamma_\psi$ in the position $\{i,i\}$ is equal to:
\begin{equation}
\gamma_{\psi,ii}=\sup_{z_i}\bigg|\frac{v_i}{z_i}\bigg|,
\label{eqn_gain_phi1}
\end{equation}
and direct substitution of equation \eqref{eqn_fullmodel_v} results in
\begin{equation}
\gamma_{\psi,ii}=\sup_{z_i}\bigg|\frac{\sin(\varphi^*_i+z_i)-\sin\varphi^*_i}{z_i}-\cos\varphi^*_i\bigg|
\label{eqn_gain_phi2}
\end{equation}
which is finite for bounded phase angles. Therefore, the nonlinear component $\psi(\cdot)$ is BIBO stable.
}

\section{Input-Output Stability Analysis}
\label{sec:inputoutput}

In this section we will establish the mathematical framework for the analysis and assessment of the system stability under the additive disturbance $u$. The proposed framework combines the input-output stability approach with the sector bounds on the nonlinearity $v$ in the Lur'e system to propose a novel small-gain theorem based on the element-wise $\mathcal{L}$-infinity norm $\lvert \cdot \rvert_{\mathcal{L}_{\infty}^{\cdot}}$.

Given the computed gain matrices of the system, the following inequalities hold:
\begin{subequations}
\begin{align}
\lvert y \rvert_{\mathcal{L}_{\infty}^m}&\leq\gamma_{y,u} \lvert u \rvert_{\mathcal{L}_{\infty}^n}+\gamma_{y,v}\lvert v \rvert_{\mathcal{L}_{\infty}^\ell} \label{eqn_plant_y_gain} \\
\lvert z \rvert_{\mathcal{L}_{\infty}^{\ell}}&\leq\gamma_{z,u} \lvert u \rvert_{\mathcal{L}_{\infty}^n}+\gamma_{w,v}\lvert v \rvert_{\mathcal{L}_{\infty}^\ell} \label{eqn_plant_w_gain} \\
\lvert v \rvert_{\mathcal{L}_{\infty}^\ell}&\leq\gamma_\psi \lvert z \rvert_{\mathcal{L}_{\infty}^{\ell}} \label{eqn_nonlinearity_gain}
\end{align}
\label{eqn_plant_nonlinearity}
\end{subequations}
The gain matrices are non-negative, i.e., $\gamma_{i,j}\geq 0 $, $ \forall \; i,j$. Using this property, we state the following lemma, which will be important in the proofs of the subsequent results of this paper.
\begin{lemma}
Given the positive matrices $\gamma_{w,v}$ and $\gamma_\psi$, the following three conditions are equivalent:
\begin{itemize}
\item[(i)] $\rho(\gamma_{z,v}\gamma_\psi)<1$
\item[(ii)] $(I-\gamma_{z,v}\gamma_\psi)^{-1}\geq 0$
\item[(iii)] There exists $x\geq0$ such that $(I-\gamma_{z,v}\gamma_\psi) x>0$
\end{itemize}
\label{lemma_Zmatrix}
\end{lemma}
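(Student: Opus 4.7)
The plan is to prove the cyclic chain of implications (i) $\Rightarrow$ (ii) $\Rightarrow$ (iii) $\Rightarrow$ (i). Throughout, let $M := \gamma_{z,v}\gamma_\psi$, which is entrywise nonnegative because it is the product of two nonnegative matrices. Recall that inequalities such as $x \geq 0$ and $x > 0$ are understood componentwise.

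For (i) $\Rightarrow$ (ii), I would use the Neumann series. If $\rho(M) < 1$, then the series $\sum_{k=0}^\infty M^k$ converges and equals $(I-M)^{-1}$. Because each power $M^k$ of a nonnegative matrix is nonnegative, the sum $(I-M)^{-1}$ is nonnegative as well. For (ii) $\Rightarrow$ (iii), I would choose the strictly positive probe vector $b := \mathbf{1}$ and set $x := (I-M)^{-1} b$. Since $(I-M)^{-1} \geq 0$ and $b \geq 0$, we obtain $x \geq 0$, and by construction $(I-M)x = b > 0$.

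The main obstacle is the implication (iii) $\Rightarrow$ (i), which requires a tool beyond direct algebra. I would invoke the Perron--Frobenius theorem for nonnegative matrices: there exists a nonzero left eigenvector $w \geq 0$ such that $w^T M = \rho(M)\, w^T$. Multiplying the strict inequality $(I-M)x > 0$ on the left by $w^T$ and using that at least one $w_i$ is strictly positive while all entries of $(I-M)x$ are strictly positive yields
\begin{equation}
    w^T (I-M) x = \big(1-\rho(M)\big)\, w^T x > 0.
\end{equation}
Since $w \geq 0$ and $x \geq 0$, we have $w^T x \geq 0$; if $w^T x = 0$ the product above would vanish, contradicting the strict inequality. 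Hence $w^T x > 0$, and dividing gives $1 - \rho(M) > 0$, i.e., $\rho(M) < 1$, which closes the cycle.

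I expect the bookkeeping in the first two steps to be routine, while the delicate part is the Perron--Frobenius step, because it relies on the existence of a nonnegative (possibly non-strictly positive) left eigenvector for a general, not necessarily irreducible, nonnegative matrix $M$; the ruling-out of $w^T x = 0$ has to be handled carefully via the strict positivity of $(I-M)x$ rather than of $w$ or $x$ themselves.
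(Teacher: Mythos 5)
Your proof is correct, but it takes a genuinely different route from the paper. The paper's proof is essentially a proof by classification: it observes that $I-\gamma_{z,v}\gamma_\psi$ is a $Z$-matrix (nonpositive off-diagonal entries, since the gains are nonnegative), that $\rho(\gamma_{z,v}\gamma_\psi)<1$ is exactly the condition for it to be a nonsingular $M$-matrix, and then cites the standard catalogue of equivalent characterizations of nonsingular $M$-matrices (Plemmons) to get (i) $\Leftrightarrow$ (ii) $\Leftrightarrow$ (iii) in one stroke. You instead prove the cyclic chain directly: Neumann series for (i) $\Rightarrow$ (ii), the probe vector $(I-M)^{-1}\mathbf{1}$ for (ii) $\Rightarrow$ (iii), and a left Perron--Frobenius eigenvector for (iii) $\Rightarrow$ (i). All three steps are sound; in particular your handling of the reducible case is right --- a general nonnegative matrix always admits a nonzero nonnegative left eigenvector for $\rho(M)$, and you correctly rule out $w^Tx=0$ by using the strict positivity of $(I-M)x$ together with $w\geq 0$, $w\neq 0$, rather than any positivity of $w$ or $x$. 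What your approach buys is a self-contained argument that exposes exactly which facts about nonnegative matrices are being used; what the paper's approach buys is brevity and immediate access to the many other equivalent $M$-matrix conditions (the inverse-positivity in (ii) is reused later in Theorems \ref{thm_sgt} and \ref{thm_CIBO}). Either proof would serve.
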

\begin{proof}
The proof is based on the properties of $Z$ and $M$-matrices. A matrix is a $Z$-matrix if its off-diagonal elements are non-positive, and it is an $M$-matrix if it is a $Z$-matrix and its eigenvalues have non-negative real parts.  First the matrix $I-\gamma_{z,v}\gamma_\psi$ is a $Z$-matrix since the gain matrices are non-negative. Now notice that $\rho(\gamma_{z,v}\gamma_\psi)<1$ if and only if the eigenvalues of $I-\gamma_{z,v}\gamma_\psi$ have positive real parts, which is the definition of a nonsingular $M$-matrix. Given that $I-\gamma_{z,v}\gamma_\psi$ is a nonsingular $M$-matrix, condition (i), (ii), and (iii) are equivalent \cite{plemmons77}. 
\end{proof} 

The theory of $Z$ and $M$-matrices also appears in the power systems literature in the analysis of the steady-state voltage stability of distribution networks \cite{aolaritei18b}.

\begin{remark}
Since the matrix $\gamma_{z,v}\gamma_\psi$ is nonnegative, it has a real eigenvalue equal to its spectral radius $\rho(\gamma_{z,v}\gamma_\psi)$ \cite{bullo17}.
\end{remark}

In the next theorem, we present the condition under which the power system is BIBO stable.

\begin{theorem}[\textbf{Small-Gain Theorem}]
 The system \eqref{eqn_fullmodel} is BIBO stable if the gain matrices $\gamma_G$ and $\gamma_\psi$ are finite, and $\rho(\gamma_{z,v}\gamma_\psi)<1$.
\label{thm_sgt}
\end{theorem}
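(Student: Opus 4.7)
The plan is to collapse the three gain inequalities \eqref{eqn_plant_y_gain}--\eqref{eqn_nonlinearity_gain} into a single closed-loop inequality on $\lvert z \rvert_{\mathcal{L}_\infty^\ell}$, invert the resulting operator using Lemma~\ref{lemma_Zmatrix}, and then propagate the resulting bound forward through the nonlinearity to $\lvert v \rvert_{\mathcal{L}_\infty^\ell}$ and finally into $\lvert y \rvert_{\mathcal{L}_\infty^m}$.

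Concretely, first I would substitute the nonlinearity bound \eqref{eqn_nonlinearity_gain} into the loop inequality \eqref{eqn_plant_w_gain} to obtain
\begin{equation*}
(I-\gamma_{z,v}\gamma_\psi)\lvert z\rvert_{\mathcal{L}_\infty^\ell} \leq \gamma_{z,u}\lvert u\rvert_{\mathcal{L}_\infty^n}.
\end{equation*}
Because $\rho(\gamma_{z,v}\gamma_\psi)<1$, Lemma~\ref{lemma_Zmatrix} guarantees that $(I-\gamma_{z,v}\gamma_\psi)^{-1}$ exists and is entrywise non-negative. Left-multiplying the componentwise inequality above by this non-negative matrix preserves the inequality direction and yields
\begin{equation*}
\lvert z\rvert_{\mathcal{L}_\infty^\ell} \leq (I-\gamma_{z,v}\gamma_\psi)^{-1}\gamma_{z,u}\lvert u\rvert_{\mathcal{L}_\infty^n}.
\end{equation*}
With $\lvert z\rvert_{\mathcal{L}_\infty^\ell}$ now bounded and $\gamma_\psi$ finite by hypothesis, \eqref{eqn_nonlinearity_gain} bounds $\lvert v\rvert_{\mathcal{L}_\infty^\ell}$, and substituting both bounds into \eqref{eqn_plant_y_gain} delivers an explicit finite bound on $\lvert y\rvert_{\mathcal{L}_\infty^m}$ linear in $\lvert u\rvert_{\mathcal{L}_\infty^n}$, as required.

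The main subtlety is that \emph{a priori} the signals $z$ and $v$ might not lie in $\mathcal{L}_\infty$, in which case the rearrangement ``subtract $\gamma_{z,v}\gamma_\psi\lvert z\rvert$ from both sides'' is formally invalid on $\infty$ entries. I would circumvent this by working throughout with truncated suprema $\sup_{0\le t\le T}\lvert\cdot(t)\rvert$ rather than the full $\mathcal{L}_\infty^{\cdot}$ norms. The three gain inequalities continue to hold for the truncated signals with the same gain matrices, since the $\mathcal{L}_1$-based estimate used in Lemma~\ref{lemma_BIBO} relies on causal convolution and is insensitive to truncation, while the truncated norms are automatically finite on any compact interval. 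Applying the argument above to the truncated quantities produces a bound uniform in $T$; letting $T\to\infty$ yields the claim. The key enabling fact, without which the vector small-gain manipulation could silently reverse an inequality, is precisely the non-negativity of $(I-\gamma_{z,v}\gamma_\psi)^{-1}$ furnished by Lemma~\ref{lemma_Zmatrix}, and this is what I expect to be the most delicate step to present carefully.
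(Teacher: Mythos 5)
Your proposal follows essentially the same route as the paper's proof: substitute the nonlinearity gain bound into the loop inequality for $\lvert z\rvert_{\mathcal{L}_\infty^\ell}$, invoke Lemma~\ref{lemma_Zmatrix} to invert $I-\gamma_{z,v}\gamma_\psi$ with a non-negative inverse, and propagate the resulting bound into $\lvert y\rvert_{\mathcal{L}_\infty^m}$. Your additional truncation argument to justify the rearrangement when the signals are not known a priori to be bounded is a point of rigor the paper glosses over, but it does not change the approach.
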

\begin{proof}
By substituting Equation \eqref{eqn_plant_w_gain} into Equation \eqref{eqn_nonlinearity_gain} and rearranging, we have
$$(I-\gamma_{z,v}\gamma_\psi)\lvert z \rvert_{\mathcal{L}_{\infty}^{\ell}}\leq\gamma_{z,u} \lvert u \rvert_{\mathcal{L}_{\infty}^n}.$$
Since $\rho(\gamma_{z,v}\gamma_\psi)<1$, Lemma \ref{lemma_Zmatrix} guarantees that $(I-\gamma_{z,v}\gamma_\psi)^{-1}\geq 0$. As such,
$$\lvert z \rvert_{\mathcal{L}_{\infty}^{\ell}}\leq(I-\gamma_{z,v}\gamma_\psi)^{-1}\gamma_{z,u} \lvert u \rvert_{\mathcal{L}_{\infty}^n}.$$
The output can be bounded by
$$\begin{aligned}
\lvert y \rvert_{\mathcal{L}_{\infty}^m}&\leq\gamma_{y,u} \lvert u \rvert_{\mathcal{L}_{\infty}^n}+\gamma_{y,v}\lvert v \rvert_{\mathcal{L}_{\infty}^\ell} \\
&\leq\gamma_{y,u} \lvert u \rvert_{\mathcal{L}_{\infty}^n}+\gamma_{y,v}\gamma_\psi \lvert z \rvert_{\mathcal{L}_{\infty}^{\ell}} \\
&\leq\big[\gamma_{y,u}+\gamma_{y,v}\gamma_\psi (I-\gamma_{z,v}\gamma_\psi)^{-1}\gamma_{z,u}\big] \lvert u \rvert_{\mathcal{L}_{\infty}^n}.
\end{aligned}$$
Therefore, the system is BIBO stable.
\end{proof}

\begin{remark}
Theorem \ref{thm_sgt} ensures more than just BIBO stability. Indeed, the last inequality in the proof implies that there exists a non-negative constant gain matrix $\gamma_H=\big[\gamma_{y,u}+\gamma_{y,v}\gamma_\psi (I-\gamma_{z,v}\gamma_\psi)^{-1}\gamma_{z,u}\big]$ such that
\begin{equation}
\lvert y \rvert_{\mathcal{L}_{\infty}^m} \leq \gamma_H \lvert u \rvert_{\mathcal{L}_{\infty}^n}.
\label{eqn_infty_gain}
\end{equation} 
and therefore the system is finite gain $\mathcal{L}_{\infty}^{\cdot}$ stable.
\end{remark}

Theorem \ref{thm_sgt} presents a novel small-gain theorem, defined for the element-wise $\mathcal{L}$-infinity norm $\lvert \cdot \rvert_{\mathcal{L}_{\infty}^{\cdot}}$. {\color{black} The small-gain condition ensures BIBO stability, which guarantees that the output is bounded for any bounded input. However, a power grid does not have a globally stable equilibrium, and therefore the boundedness of the output cannot be guaranteed for every bounded input. Thus, for such a system the small-gain condition is not satisfied in general, for any bounded input. However, if the magnitude of the disturbance is constrained by some appropriate $\bar{u}$, the output could be bounded. This reasoning is further explained in the following.}

{\color{black} The condition in Theorem \ref{thm_sgt} is not satisfied for an arbitrary nonlinear gain matrix $\gamma_\psi$. Indeed, since $\rho(\gamma_{z,v}\gamma_\psi)<1$, it results that for fixed linear gain matrix $\gamma_{z,v}$, there exists a limit on the magnitude of $\gamma_\psi$ such that our system is BIBO stable. This can be deduced from the fact that $\gamma_\psi$ is a non-negative diagonal matrix, and therefore the spectral radius $\rho(\gamma_{z,v}\gamma_\psi)$ is a strictly increasing function in $\gamma_\psi$}. Recall that $z$ corresponds to the phase differences deviation from the phase differences at the equilibrium, i.e., $z = E^T \delta - E^T \delta^*$. Let $\bar{z}$ be some magnitude bound on $z$, i.e., $\lvert z \rvert_{\mathcal{L}_{\infty}^{\ell}} \leq \bar{z}$. Now $\gamma_\psi$ is function of $\bar{z}$, i.e., $\gamma_\psi = \gamma_\psi(\bar{z})$, and that larger $\bar{z}$ results in larger $\gamma_\psi(\bar{z})$ (see Figure \ref{fig_sector_bound_cos}). As a consequence, the condition in Theorem \ref{thm_sgt} could be satisfied for some $\bar{z}$.

\begin{figure}[tb]
	\centering
	\includegraphics[width=2.8in]{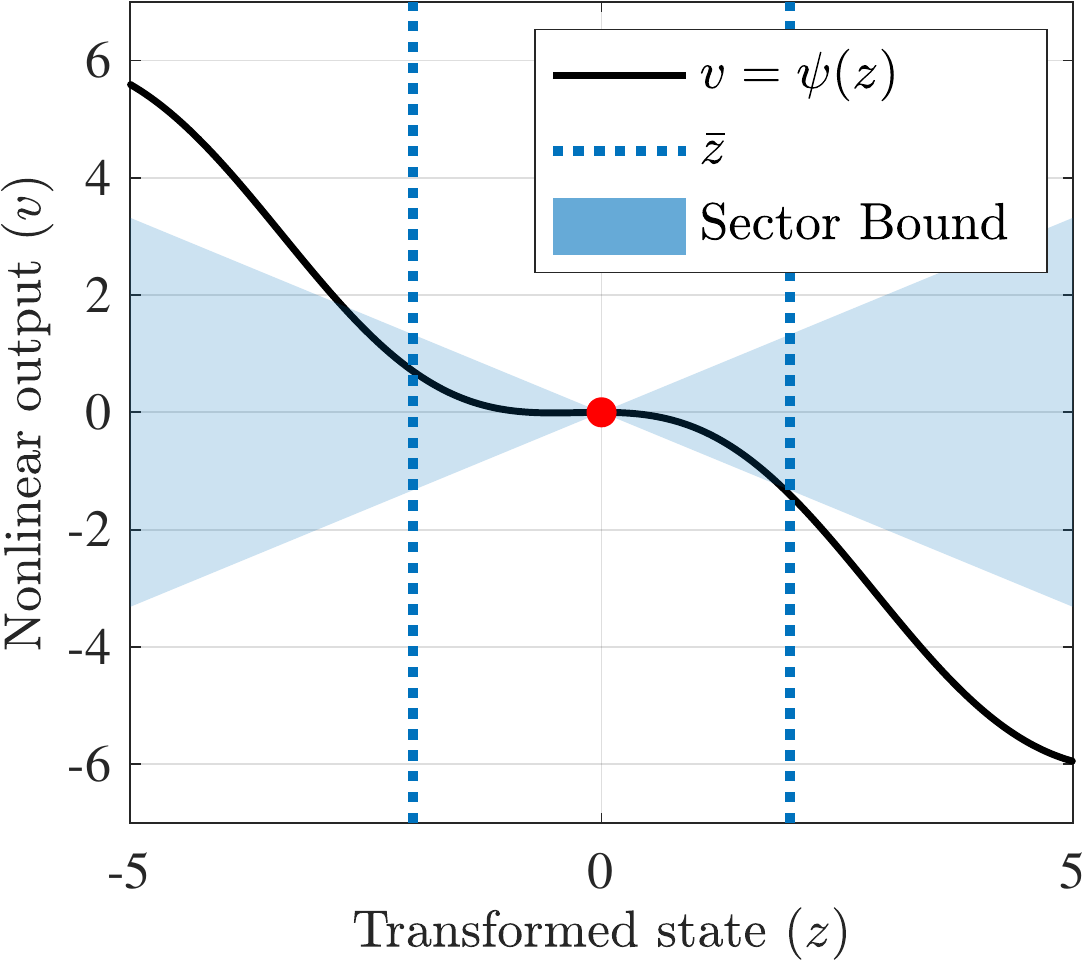}
	\caption{Sector bound for $v=\psi(z)=\sin(z+\varphi^*)-\cos(\varphi^*)z$.}
	\label{fig_sector_bound_cos}
\end{figure}

This observation is exploited in the following theorem, where a sufficient condition for the CIBO stability of our system is presented.

\begin{theorem}
Let $\bar{u}$ be a bound on the magnitude of the input, i.e., $\lvert u \rvert_{\mathcal{L}_{\infty}^n} \leq \bar{u}$. If  $\gamma_G$ and $\gamma_\psi$ are finite, and if there exist $\bar{u}$ and $\bar{z}$ satisfying
\begin{equation}
\gamma_{z,u}\bar{u}<(I-\gamma_{z,v}\gamma_\psi(\bar{z}))\bar{z},
\label{cond_CIBO}
\end{equation}
then the system \eqref{eqn_fullmodel} is CIBO stable and $\lvert z \rvert_{\mathcal{L}_{\infty}^{\ell}}\leq\bar{z}$.
\label{thm_CIBO}
\end{theorem}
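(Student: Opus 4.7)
The strategy is a bootstrapping argument that confines $z(\cdot)$ to the box $[-\bar{z},\bar{z}]$ where the local sector bound $\gamma_\psi(\bar{z})$ is valid, and then applies the small-gain machinery of Theorem \ref{thm_sgt} on that box.

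First I would extract from the hypothesis that condition (iii) of Lemma \ref{lemma_Zmatrix} is met with $x=\bar{z}$. Since $\gamma_{z,u}\bar{u}\geq 0$, the strict inequality $\gamma_{z,u}\bar{u}<(I-\gamma_{z,v}\gamma_\psi(\bar{z}))\bar{z}$ forces $(I-\gamma_{z,v}\gamma_\psi(\bar{z}))\bar{z}>0$ component-wise. Lemma \ref{lemma_Zmatrix} then delivers $\rho(\gamma_{z,v}\gamma_\psi(\bar{z}))<1$ and $(I-\gamma_{z,v}\gamma_\psi(\bar{z}))^{-1}\geq 0$. Multiplying the hypothesis on the left by this inverse preserves strict inequality (no row of an invertible nonnegative matrix can vanish), yielding
\begin{equation*}
(I-\gamma_{z,v}\gamma_\psi(\bar{z}))^{-1}\gamma_{z,u}\bar{u}<\bar{z}.
\end{equation*}

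The core step is a contradiction argument showing that $|z(t)|$ never leaves the box. Since $x_0=\mathbf{0}$ gives $z(0)=0$ and $z(\cdot)$ is continuous, I would define the first-escape time $t^\star=\inf\{t\geq 0:|z_i(t)|>\bar{z}_i\ \text{for some}\ i\}$. Assuming $t^\star<\infty$, on $[0,t^\star]$ the pointwise bound $|z_i(t)|\leq \bar{z}_i$ ensures $|v_i(t)|\leq \gamma_{\psi,ii}(\bar{z})|z_i(t)|$. Causality of the linear block together with $x_0=\mathbf{0}$ allow the $\mathcal{L}_1$ estimate of Lemma \ref{lemma_BIBO} to be restated with suprema restricted to $[0,t^\star]$. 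Reproducing the manipulation in the proof of Theorem \ref{thm_sgt} on these truncated suprema gives
\begin{align*}
\sup_{t\in[0,t^\star]}|z(t)| &\leq (I-\gamma_{z,v}\gamma_\psi(\bar{z}))^{-1}\gamma_{z,u}\sup_{t\in[0,t^\star]}|u(t)| \\
&\leq (I-\gamma_{z,v}\gamma_\psi(\bar{z}))^{-1}\gamma_{z,u}\bar{u}\;<\;\bar{z},
\end{align*}
which contradicts the definition of $t^\star$. Hence $t^\star=\infty$ and $\lvert z\rvert_{\mathcal{L}_{\infty}^{\ell}}\leq\bar{z}$. With this global bound in place, the sector gain $\gamma_\psi(\bar{z})$ is valid for all $t\geq 0$, and the chain \eqref{eqn_plant_y_gain}--\eqref{eqn_nonlinearity_gain} used in Theorem \ref{thm_sgt} bounds $\lvert y\rvert_{\mathcal{L}_{\infty}^m}$ by $\gamma_H(\bar{z})\lvert u\rvert_{\mathcal{L}_{\infty}^n}$, establishing CIBO stability per Definition \ref{CIBO}.

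The main obstacle I anticipate is the truncation step. Lemma \ref{lemma_BIBO} was stated with a supremum over all of $[0,\infty)$, whereas the sector bound $\gamma_\psi(\bar{z})$ is only known to hold for as long as $z$ stays in the box. Without a truncated-horizon version of the gain inequality, one cannot invoke the linear gain bound \emph{before} $z$ has been confined, yet confinement is precisely what the argument is trying to conclude. Causality of the linear subsystem with zero initial condition and memorylessness of $\psi$ make the truncated estimates routine, but writing them explicitly is what makes the bootstrap close without circularity.
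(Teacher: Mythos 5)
Your proposal is correct, and its algebraic core coincides with the paper's: both extract $\rho(\gamma_{z,v}\gamma_\psi(\bar z))<1$ and inverse-positivity of $I-\gamma_{z,v}\gamma_\psi(\bar z)$ from Lemma \ref{lemma_Zmatrix} via condition (iii) with $x=\bar z$, and both then combine \eqref{eqn_plant_w_gain} and \eqref{eqn_nonlinearity_gain} with the hypothesis \eqref{cond_CIBO} to conclude $\lvert z\rvert_{\mathcal{L}_{\infty}^{\ell}}\leq\bar z$. The difference is that the paper performs this substitution directly on the suprema over all of $[0,\infty)$ — it writes $\lvert v\rvert_{\mathcal{L}_{\infty}^{\ell}}\leq\gamma_\psi(\bar z)\lvert z\rvert_{\mathcal{L}_{\infty}^{\ell}}$, rearranges to $(I-\gamma_{z,v}\gamma_\psi(\bar z))\lvert z\rvert_{\mathcal{L}_{\infty}^{\ell}}\leq(I-\gamma_{z,v}\gamma_\psi(\bar z))\bar z$, and inverts — without ever justifying why the \emph{local} sector gain $\gamma_\psi(\bar z)$, valid only while $z$ stays in the box $[-\bar z,\bar z]$, may be applied globally before confinement has been established. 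The circularity you flag as "the main obstacle" is therefore a genuine gap in the published argument, not merely in your reconstruction of it, and your first-escape-time contradiction with truncated (causal) $\mathcal{L}_1$ estimates and $z(0)=0$ is the standard and correct way to close it. In short, your route proves strictly more carefully what the paper asserts; the paper's version buys brevity at the cost of leaving the bootstrap implicit. One cosmetic point: the strictness bookkeeping in $(I-\gamma_{z,v}\gamma_\psi(\bar z))^{-1}\gamma_{z,u}\bar u<\bar z$ is fine as you argue it (an invertible nonnegative matrix has no zero row), and that strict inequality is exactly what makes the contradiction at $t^\star$ work, since continuity forces $\lvert z_i(t^\star)\rvert=\bar z_i$ for some $i$.
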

\begin{proof}
Since the gain matrix is a positive matrix and $\bar{u}\geq0$, $(I-\gamma_{z,v}\gamma_\psi(\bar{z}))\bar{z}>\gamma_{z,u}\bar{u}\geq0$ with $\bar{z}\geq0$. From Lemma \ref{lemma_Zmatrix}, $\rho(\gamma_{z,v}\gamma_\psi)<1$, and by using Theorem \ref{thm_sgt}, the system is BIBO stable for $\lvert u \rvert_{\mathcal{L}_{\infty}^n} \leq \bar{u}$.
Substituting condition \eqref{cond_CIBO} and Equation \eqref{eqn_nonlinearity_gain} into Equation \eqref{eqn_plant_y_gain}, we have
$$\begin{aligned}\lvert z \rvert_{\mathcal{L}_{\infty}^{\ell}}&\leq\gamma_{z,u} \lvert u \rvert_{\mathcal{L}_{\infty}^n}+\gamma_{z,v} \lvert v \rvert_{\mathcal{L}_{\infty}^\ell} \\
&\leq(I-\gamma_{z,v}\gamma_\psi(\bar{z}))\bar{z}+\gamma_{z,v}\gamma_\psi(\bar{z}) \lvert z \rvert_{\mathcal{L}_{\infty}^{\ell}}. \end{aligned}$$
By rearranging,
$$(I-\gamma_{z,v}\gamma_\psi(\bar{z}))\lvert z \rvert_{\mathcal{L}_{\infty}^{\ell}}\leq(I-\gamma_{z,v}\gamma_\psi(\bar{z}))\bar{z} \\$$
Now, $I-\gamma_{z,v}\gamma_\psi(\bar{z})$ is inverse-positive from Lemma \ref{lemma_Zmatrix}, so $\lvert z \rvert_{\mathcal{L}_{\infty}^{\ell}} \leq \bar{z}$.
\end{proof}

\begin{remark}
{\color{black} Theorem \ref{thm_CIBO} provides a local small-gain condition over the domain $\lvert z \rvert_{\mathcal{L}_{\infty}^{\ell}}\leq\bar{z}$. If the condition \eqref{cond_CIBO} is satisfied for all $\bar{z}$, then it is equivalent to the small-gain condition from Theorem \ref{thm_sgt}, and the system is BIBO stable.}
\end{remark}
This remark can be directly observed from Lemma \ref{lemma_Zmatrix}. This inequality condition is a different representation of the small-gain condition, but further exploits the fact that $\gamma_\psi$ can be a function of $\bar{z}$. There is a natural trade-off based on the value of $\bar{z}$. The nonlinear gain $\gamma_\psi$ increases as $\bar{z}$ increases, which makes it difficult to meet the small-gain condition. On the other hand, small $\bar{z}$ imposes a stricter bound on the phase difference on the transmission lines. This trade-off is represented as the product of $I-\gamma_{z,v}\gamma_\psi(\bar{z})$ and $\bar{z}$, which are monotonically decreasing and linearly increasing functions of $\bar{z}$, respectively.

Now, in order to enforce the generator frequency constraints, we need to impose an additional condition that will guarantee the CICO stability. This is presented in the following theorem.

\begin{theorem}
Let $\bar{u}$ be a bound on the magnitude of the input, i.e., $\lvert u \rvert_{\mathcal{L}_{\infty}^n} \leq \bar{u}$. If  $\gamma_G$ and $\gamma_\psi$ are finite, and if there exist $\bar{u}$ and $\bar{z}$ such that
\begin{equation}
\begin{aligned}
&\gamma_{z,u}\bar{u}<(I-\gamma_{z,v}\gamma_\psi(\bar{z}))\bar{z} \\
&\gamma_{y,u}\bar{u}+\gamma_{y,v}\gamma_\psi(\bar{z})\bar{z} \leq \bar{y}
\end{aligned}
\label{cond_CICO}
\end{equation}
then the system \eqref{eqn_fullmodel} is CICO stable. Moreover, we have $\lvert z \rvert_{\mathcal{L}_{\infty}^{\ell}}\leq\bar{z}$ and $\lvert y \rvert_{\mathcal{L}_{\infty}^m}\leq\bar{y}$.
\label{thm_CICO}
\end{theorem}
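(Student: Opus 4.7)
The plan is to build directly on Theorem \ref{thm_CIBO}, since the first inequality in \eqref{cond_CICO} is exactly the hypothesis of that theorem. Applying it immediately yields two things: the system is CIBO stable over the domain $\lvert u \rvert_{\mathcal{L}_{\infty}^n} \leq \bar{u}$, and the phase-difference signal is confined to $\lvert z \rvert_{\mathcal{L}_{\infty}^{\ell}} \leq \bar{z}$. That gives us the first of the two output-type bounds required for CICO stability and also validates that the argument $\bar{z}$ passed to $\gamma_\psi(\cdot)$ is consistent with the actual operating range of $z$ on which the nonlinear sector gain was computed.

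Next I would chain the standard gain inequalities to handle the frequency output $y$. Starting from \eqref{eqn_plant_y_gain} and substituting the nonlinearity bound \eqref{eqn_nonlinearity_gain}, we get
\begin{equation*}
\lvert y \rvert_{\mathcal{L}_{\infty}^m} \;\leq\; \gamma_{y,u}\lvert u \rvert_{\mathcal{L}_{\infty}^n} + \gamma_{y,v}\gamma_\psi(\bar{z})\lvert z \rvert_{\mathcal{L}_{\infty}^{\ell}}.
\end{equation*}
Because all gain matrices are entry-wise non-negative, the inequality is monotone in $\lvert u \rvert_{\mathcal{L}_{\infty}^n}$ and $\lvert z \rvert_{\mathcal{L}_{\infty}^{\ell}}$, so we may substitute the bounds $\lvert u \rvert_{\mathcal{L}_{\infty}^n} \leq \bar{u}$ (by hypothesis) and $\lvert z \rvert_{\mathcal{L}_{\infty}^{\ell}} \leq \bar{z}$ (from Step 1) to obtain
\begin{equation*}
\lvert y \rvert_{\mathcal{L}_{\infty}^m} \;\leq\; \gamma_{y,u}\bar{u} + \gamma_{y,v}\gamma_\psi(\bar{z})\bar{z}.
\end{equation*}
The second condition in \eqref{cond_CICO} then upper-bounds the right-hand side by $\bar{y}$, which is exactly $\lvert y \rvert_{\mathcal{L}_{\infty}^m} \leq \bar{y}$ as required by Definition \ref{CICO}.

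There is no real obstacle: the theorem is essentially a bookkeeping composition of Theorem \ref{thm_CIBO} (which already contains the nontrivial fixed-point / $M$-matrix reasoning via Lemma \ref{lemma_Zmatrix}) with a single monotone substitution into \eqref{eqn_plant_y_gain}. The only subtlety worth flagging explicitly in the write-up is the use of entry-wise non-negativity of $\gamma_{y,u}$, $\gamma_{y,v}$, and $\gamma_\psi(\bar{z})$, which is what permits the component-wise substitution of the $z$-bound into a vector inequality without reversing any inequality signs.
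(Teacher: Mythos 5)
Your proposal is correct and follows essentially the same route as the paper's proof: invoke Theorem \ref{thm_CIBO} via the first inequality in \eqref{cond_CICO} to get $\lvert z \rvert_{\mathcal{L}_{\infty}^{\ell}}\leq\bar{z}$, then substitute \eqref{eqn_nonlinearity_gain} into \eqref{eqn_plant_y_gain} and use the second inequality to conclude $\lvert y \rvert_{\mathcal{L}_{\infty}^m}\leq\bar{y}$. Your explicit remark about entry-wise non-negativity of the gain matrices justifying the component-wise substitution is a detail the paper leaves implicit, but the argument is the same.
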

\begin{proof}
From Theorem \ref{thm_CIBO}, the first condition in \eqref{cond_CICO} ensures $\lvert z \rvert_{\mathcal{L}_{\infty}^{\ell}} \leq \bar{z}$. Moreover, the substitution of the condition in this theorem and Equation \eqref{eqn_nonlinearity_gain} into Equation \eqref{eqn_plant_y_gain} results in
$$\lvert y \rvert_{\mathcal{L}_{\infty}^m}\leq\gamma_{y,u} \lvert u \rvert_{\mathcal{L}_{\infty}^n}+\gamma_{y,v}\gamma_\psi \lvert z \rvert_{\mathcal{L}_{\infty}^{\ell}} \leq \gamma_{y,u} \bar{u}+\gamma_{y,v}\gamma_\psi \bar{z}\leq\bar{y}.$$
\end{proof}

{\color{black} The inequalities proposed in Theorem \ref{thm_CICO} provide a sufficient condition for CICO stability. Condition \eqref{cond_CICO} will be used in the next section as a constraint in an optimization problem that computes the maximum admissible disturbance magnitude.}

\section{Computation of the Disturbance Bound}
\label{sec:optimization}

 In the following, an optimization problem is formulated to find the bound $\bar{u}$ on the disturbance such that the frequencies of the generators remain inside the operational limits. Given a potential disturbance $u$, the system operator only needs to check that $\lvert u \rvert_{\mathcal{L}_{\infty}^n}\leq\bar{u}$ is satisfied to ensure that the generator frequency constraints are not violated. The input-output stability framework developed in Theorem \ref{thm_CICO} will be used to solve this problem.

The first step in doing so is to derive an explicit expression for the gain of nonlinear component $\gamma_\psi$. Recall that $\gamma_\psi$ is function of $\bar{z}$:
\begin{equation}
\gamma_{\psi,ii}(\bar{z}_i)=\sup_{|z_i|\leq\bar{z_i}}\bigg|\frac{\sin(z_i+\varphi^*_i)-\sin(\varphi^*_i)}{z_i}-\cos(\varphi^*_i)\bigg|
\label{eqn_gain_phi3}
\end{equation}
where $\varphi^*=E^T\delta^*$.

 {\color{black}In the following corollary, we derive an analytical expression for the gain of the nonlinear components $\gamma_{\psi,ii}(\bar{z}_i)$, for angle deviation constraints that are of practical interest.}

\begin{corollary}
Let $\bar{z}$ be a bound on the angle difference between generators and $\varphi^*=E^T\delta^*$ be such that $|\varphi^*_i|+\bar{z}_i\leq \pi, |\varphi^*_i|\leq \frac{\pi}{2}$ $\forall i$. Then,
\begin{equation}
\label{corollary_phigain}
\gamma_{\psi,ii}(\bar{z}_i)\leq\cos|\varphi^*_i|-\frac{\sin(|\varphi^*_i|+\bar{z}_i)-\sin|\varphi^*_i|}{\bar{z}_i}.
\end{equation}
\end{corollary}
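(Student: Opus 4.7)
The plan is to reduce to the case $\varphi^*_i \geq 0$ via a symmetry of the supremum in \eqref{eqn_gain_phi3} and then show that the supremum is attained at $z_i = \bar{z}_i$. The substitution $(\varphi^*_i, z_i) \mapsto (-\varphi^*_i, -z_i)$ leaves the integrand $\bigl|(\sin(\varphi^*_i + z_i) - \sin\varphi^*_i)/z_i - \cos\varphi^*_i\bigr|$ invariant, so $\gamma_{\psi,ii}(\bar{z}_i)$ depends on $\varphi^*_i$ only through its absolute value, and we may take $\varphi^*_i = \phi \in [0, \pi/2]$ with $\phi + \bar{z}_i \leq \pi$. For convenience I introduce the two secant slopes
\[
h_+(w) := \frac{\sin(\phi + w) - \sin\phi}{w}, \qquad h_-(w) := \frac{\sin\phi - \sin(\phi - w)}{w},
\]
so the quantity inside the sup equals $|h_+(w) - \cos\phi|$ for $z_i = w > 0$ and $|h_-(w) - \cos\phi|$ for $z_i = -w < 0$.

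For $z_i = w \in (0, \bar{z}_i]$, both endpoints $\phi$ and $\phi + w$ lie in $[0, \pi]$, where $\sin$ is concave, so the tangent at $\phi$ dominates the secant, $h_+(w) \leq \cos\phi$, and the expression reduces to $\cos\phi - h_+(w)$. A direct computation gives $h_+'(w) = [w\cos(\phi+w) - \sin(\phi+w) + \sin\phi]/w^2$, whose numerator vanishes at $w = 0$ and has derivative $-w\sin(\phi+w)$, non-positive on our range. Hence $h_+$ is non-increasing on $(0,\bar z_i]$, so $\cos\phi - h_+(w)$ is non-decreasing, and its supremum is $\cos\phi - h_+(\bar{z}_i)$, which is precisely the right-hand side of \eqref{corollary_phigain}.

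The case $z_i = -w < 0$ is the main obstacle: the point $\phi - w$ can be negative, so $\sin$ is not concave on $[\phi - w, \phi + w]$ and monotonicity of $h_-$ cannot be read off directly. I would circumvent this by a direct comparison with $h_+(w)$ using the sum-to-product identities $\sin(\phi+w) - \sin(\phi-w) = 2\cos\phi \sin w$ and $\sin(\phi+w) + \sin(\phi-w) = 2\sin\phi \cos w$, which yield
\[
h_+(w) + h_-(w) = 2\cos\phi \cdot \tfrac{\sin w}{w}, \qquad h_-(w) - h_+(w) = 2\sin\phi \cdot \tfrac{1 - \cos w}{w}.
\]
Since $\cos\phi, \sin\phi \geq 0$ for $\phi \in [0, \pi/2]$, $\sin w / w \leq 1$, and $1 - \cos w \geq 0$, the first identity gives $h_+(w) + h_-(w) \leq 2\cos\phi$ and the second gives $h_-(w) \geq h_+(w)$. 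A short case split on the sign of $h_-(w) - \cos\phi$ then yields $|h_-(w) - \cos\phi| \leq \cos\phi - h_+(w)$: if it is non-negative, the first identity provides the bound, and if it is negative, the second does. Combined with the previous step this bounds $|h_\pm(w) - \cos\phi|$ by $\cos\phi - h_+(\bar{z}_i)$ for every $w \in (0, \bar{z}_i]$, establishing \eqref{corollary_phigain}.
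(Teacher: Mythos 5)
Your proof is correct, and it reaches the bound by a genuinely different route than the paper. The paper's proof expands $\sin(z_i+\varphi^*_i)$ by the angle-addition formula and applies the triangle inequality, which bounds the quantity inside the supremum in \eqref{eqn_gain_phi3} by the even function $\tfrac{|z_i|-\sin|z_i|}{|z_i|}\cos|\varphi^*_i|+\tfrac{1-\cos|z_i|}{|z_i|}\sin|\varphi^*_i|$ of $z_i$; it then asserts, without proof, that this function is monotonically increasing on the admissible range, so that the supremum sits at $|z_i|=\bar z_i$. You instead keep the two signs of $z_i$ separate: for $z_i>0$ you use concavity of $\sin$ on $[0,\pi]$ together with an explicit derivative computation to show that $\cos\phi-h_+(w)$ is non-negative and non-decreasing, and for $z_i<0$ you compare $h_-$ to $h_+$ via the sum-to-product identities instead of invoking the triangle inequality. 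The two routes are consistent: one checks that $\tfrac{w-\sin w}{w}\cos\phi+\tfrac{1-\cos w}{w}\sin\phi=\cos\phi-h_+(w)$, so the paper's symmetrized bound is exactly your positive-side expression, and your monotonicity computation for $h_+$ supplies precisely the step the paper leaves unjustified. What the paper's route buys is brevity; what yours buys is a complete verification of the monotonicity claim and the additional information that the supremum is attained at $z_i=\bar z_i$, i.e., that \eqref{corollary_phigain} in fact holds with equality (so the sector bound is tight). The only implicit assumption in both arguments is $\bar z_i>0$, without which the right-hand side of \eqref{corollary_phigain} is undefined.
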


\begin{proof}
From Equation \eqref{eqn_gain_phi2} and given $|\varphi^*_i|\leq \frac{\pi}{2}$, we have
$$\begin{aligned} 
&\gamma_{\psi,ii}(\bar{z}_i)=\sup_{|z_i|\leq\bar{z_i}}\bigg|\frac{\sin(z_i+\varphi^*_i)-\sin(\varphi^*_i)}{z_i}-\cos(\varphi^*_i)\bigg| \\ 
&=\sup_{|z_i|\leq\bar{z}_i}\bigg|\frac{\sin z_i-z_i}{z_i}\cos\varphi^*_i+\frac{\cos z_i-1}{z_i}\sin\varphi^*_i\bigg| \\ 
&\leq\sup_{|z_i|\leq\bar{z}_i}\frac{|z_i|-\sin |z_i|}{|z_i|}\cos|\varphi^*_i|+\frac{1-\cos |z_i|}{|z_i|}\sin|\varphi^*_i|
\end{aligned}$$
Moreover, the function inside the supremum is increasing monotonically with respect to $z_i$ for $|\varphi^*_i|+\bar{z}_i\leq \pi$. Therefore, the inequality \eqref{corollary_phigain} holds true.
\end{proof}

 {\color{black} The analytical expression for the gain of the nonlinearity \eqref{eqn_gain_phi3} will be used in the conditions proposed in Theorem \ref{thm_CICO}, which also guarantees the operational constraints of the system. The maximum bound on the magnitude of the admissible disturbance can be computed with the following optimization problem:}
\begin{equation}
\begin{aligned}
& \underset{\bar{z}\geq 0,\ \bar{u}\geq 0, \ \mu}{\text{maximize}} & & \mu \\
& \text{subject to} & & \gamma_{z,u}\bar{u}<(I-\gamma_{z,v}\gamma_\psi(\bar{z}))\bar{z} \\
& & & \gamma_{y,u}\bar{u}+\gamma_{y,v}\gamma_\psi(\bar{z})\bar{z} \leq \bar{y} \\
& & & \mu\leq c^T\bar{u}
\end{aligned}
\label{eqn_opt}
\end{equation}
 where $\bar{y}$ is the generator frequency limit provided by the system operators. The vector $c\in\mathbf{R}^n$ is used to fix the ratio of the disturbance entering at each bus. This procedure allows us to find the maximum disturbance magnitude at a particular bus, or alternatively, at a combination of buses.

\begin{proposition}
 {\color{black}
 The optimization problem \eqref{eqn_opt} is convex within the region defined by the angle deviation constraints $|\varphi^*_i|+\bar{z}_i\leq \pi, |\varphi^*_i|\leq \frac{\pi}{2}$ $\forall i$.
 }
\end{proposition}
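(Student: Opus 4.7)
The plan is to show that, once we invoke the explicit bound from the preceding Corollary to replace $\gamma_\psi(\bar z)$, each constraint in problem \eqref{eqn_opt} defines a convex set and the objective is linear. The crux of the argument is a one-line second-derivative check; everything else is assembly.

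First, I would introduce the function $f_i(\bar z_i) = \gamma_{\psi,ii}(\bar z_i)\,\bar z_i$. Using the Corollary's expression,
$$f_i(\bar z_i) = \cos|\varphi^*_i|\,\bar z_i - \sin(|\varphi^*_i|+\bar z_i) + \sin|\varphi^*_i|.$$
Then $f_i''(\bar z_i) = \sin(|\varphi^*_i|+\bar z_i)$, which is non-negative on the stated region $|\varphi^*_i|+\bar z_i\leq\pi$, $|\varphi^*_i|\leq\pi/2$. Hence each $f_i$ is convex in $\bar z_i$ on the feasible region.

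Next, since $\gamma_\psi(\bar z)$ is diagonal, the vector $\gamma_\psi(\bar z)\bar z$ has entries $f_i(\bar z_i)$. When this vector is pre-multiplied by the entry-wise non-negative matrices $\gamma_{z,v}$ or $\gamma_{y,v}$, each resulting component is a non-negative linear combination of the convex functions $f_i$, and therefore convex in $\bar z$. Consequently, the left-hand sides of the two nonlinear constraints, $\gamma_{z,u}\bar u + \gamma_{z,v}\gamma_\psi(\bar z)\bar z - \bar z$ and $\gamma_{y,u}\bar u + \gamma_{y,v}\gamma_\psi(\bar z)\bar z - \bar y$, are convex functions of $(\bar z,\bar u)$: the terms in $\bar u$ and the $-\bar z$ piece contribute affine summands, while $\gamma_\psi(\bar z)\bar z$ contributes the convex one. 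Their sub-level sets at zero are therefore convex. The remaining constraint $\mu \leq c^T\bar u$ together with $\bar z\geq 0$ and $\bar u\geq 0$ are affine, and the objective $\mu$ is linear, so intersecting these convex sets with a linear objective yields a convex program.

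The main obstacle is precisely the convexity of $f_i$: nothing stops $\gamma_{\psi,ii}(\bar z_i)$ alone from being non-convex, and multiplying by $\bar z_i$ is what makes the second-derivative computation clean. The hypothesis $|\varphi^*_i|+\bar z_i\leq\pi$ is exactly what guarantees $\sin(|\varphi^*_i|+\bar z_i)\geq 0$, so it is not merely a technical assumption inherited from the Corollary but the very reason convexity holds; outside this range, $f_i$ can become concave and the argument breaks down.
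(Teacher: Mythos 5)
Your proof is correct and follows essentially the same route as the paper: substitute the Corollary's explicit expression for $\gamma_{\psi,ii}(\bar z_i)\bar z_i$, observe that the only nonlinearity is $\sin(|\varphi^*_i|+\bar z_i)$, which is concave (equivalently, your $f_i$ is convex) precisely on the region $|\varphi^*_i|+\bar z_i\leq\pi$, and note that pre-multiplication by the non-negative gain matrices preserves this. Your version is in fact a bit more explicit than the paper's, which asserts concavity of the sinusoidal term and convexity of the resulting constraint set without the second-derivative computation or the non-negative-combination step.
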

\begin{proof}
Using the explicit expression for $\gamma_\psi(\bar{z})$ in the constraint $\gamma_{z,u}\bar{u}\leq(I-\gamma_{z,v}\gamma_\psi(\bar{z}))\bar{z}$, we obtain the following constraint:
\begin{equation}
\begin{aligned}
\gamma_{z,u}\bar{u}\leq(I &-\gamma_{z,v}\text{diag}(\cos\varphi^*))\bar{z} \\
 \hskip 2em &-\gamma_{z,v}\sin|\varphi^*|+\gamma_{z,v}\sin(|\varphi^*|+\bar{z})
\end{aligned}
\label{eqn_sin}
\end{equation}

The sinusoidal term is concave within the region defined by the bound $0\leq|\varphi^*_i|+\bar{z}_i\leq\pi$, and therefore the constraint in equation \eqref{eqn_sin} forms a convex region. Similarly, the constrained output condition is similarly bounded to a convex region of a sinusoidal function. Therefore, the constraints are convex, and we can conclude that the optimization problem \eqref{eqn_opt} is convex.
\end{proof}

\section{Simulations}
{\color{black} In this section, we numerically validate the theoretical and computational results presented in this paper. For illustration purposes, we first consider a single machine infinite bus system, on which we test and interpret the proposed results. Then, some practically important disturbance scenarios (e.g., simultaneous tripping of generators and loads, as well as the uncertainty from wind generation) will be tested on the standard IEEE 9-bus and 39-bus test cases.}

\subsection{Single Machine Infinite Bus (SMIB)}
The procedure and results are illustrated on a system composed of a single machine connected to an infinite bus through a lossless line. The dynamic equation is given by
\begin{equation}
M\ddot{\delta}+D\dot{\delta}+\phi\sin\delta=p+u
\end{equation}
where $M=1$, $D=1.2$, $p=0.2$ and $\phi=0.8$ are the parameters used in this study. For $u=0$, its equilibrium is given by $\delta^*=\arcsin(p/\phi)$, $\dot{\delta}=0$. Let the output be the frequency in Hertz, $y=\dot{\delta}/2\pi$. Substituting $z=\delta-\delta^*$, and $v=\sin(\delta)-\cos(\delta^*)w$ we get
\begin{equation}
M\ddot{x}+D\dot{x}+\phi\cos(\delta^*)x+\phi v=u
\end{equation}

In frequency domain,
\begin{equation}
\begin{aligned}
Z(s)&=\frac{1}{M s^2+D s+\phi\cos(\delta^*)}\big[U(s)-\phi V(s)\big] \\
&=G_{w,u}U(s)+G_{w,v}V(s),
\end{aligned}
\end{equation}
and $Y(s)=sZ/2\pi$.

The gains corresponding to the transfer functions $G_{y,u}$, $G_{y,v}$, $G_{z,u}$, and $G_{z,v}$ are $\gamma_{y,u}=0.178$, $\gamma_{y,v}=0.142$, $\gamma_{z,u}=1.434$, and $\gamma_{z,v}=1.148$, respectively. Following the proposed procedure, the nonlinear gain is a function of the bound on the phase difference. This can be seen in Figure \ref{fig_max_perturbation_2bus}(a). 

\begin{figure}[t]
	\centering
	\includegraphics[width=3.2in]{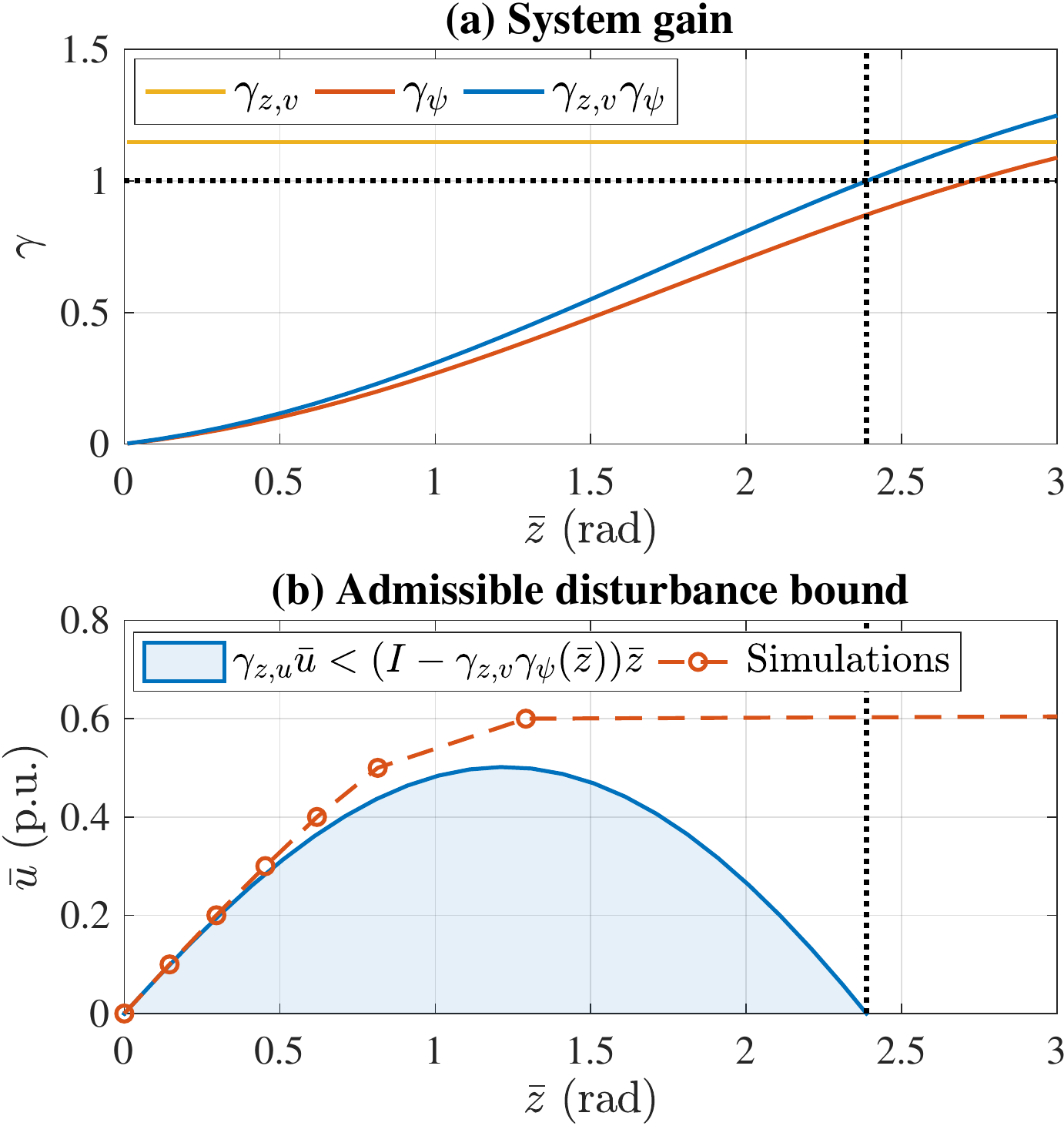}
	\caption{Maximum disturbance magnitude allowed as a function of sector bound for a SMIB system.}
	\label{fig_max_perturbation_2bus}
\end{figure}

Since the gain matrices are just scalars, the condition for BIBO stability is simply $\gamma_{z,v}\gamma_\psi<1$. In Figure \ref{fig_max_perturbation_2bus}(b) we plot with blue the CIBO stability condition presented in Theorem II. {\color{black} The vertical dashed black lines in the Figures \ref{fig_max_perturbation_2bus}(a) and \ref{fig_max_perturbation_2bus}(b) show that the small-gain condition is violated if the CIBO stability condition is not satisfied.}

{\color{black} In Figure \ref{fig_max_perturbation_2bus}(b), the estimation of the upper bound on the disturbance magnitude was computed by time-domain simulations. After applying a step disturbance with magnitude bounded by $\bar{u}$, the maximum phase difference deviation $\bar{z}$ was recorded. All the simulation points are represented with orange, and they are all connected by a dashed orange line. Since every simulation point is only a single realization among all possible disturbances, it only provides an upper-bound on the magnitude of the disturbance.}

The approach proposed in this paper uses convex optimization to efficiently compute the maximum magnitude for the admissible disturbance. Figure \ref{fig_max_perturbation_2bus}(b) shows that the gap between the upper-bound and the bound on the magnitude based on our method is very tight. The maximum disturbance magnitude allowed occurs when $\bar{z}$ is about 1.2 rad, which can be computed with the optimization problem \eqref{eqn_opt}. The small-gain condition in Figure \ref{fig_max_perturbation_2bus}(a) is violated when the angle deviation is about 2.4 rad. The bound on the disturbance magnitude becomes zero at the same $\bar{z}$, which illustrates the equivalence of condition (i) and (iii) in Lemma \ref{lemma_Zmatrix}. In Figure \ref{fig_max_freq_2bus}, the maximum frequency deviation is computed with the second condition in Theorem \ref{thm_CICO}. Similarly, a lower-bound on the frequency deviation was computed using the same procedure explained for the upper-bound on the magnitude of the disturbance.


\subsection{9-bus and 39-bus systems}
This section presents numerical case studies on the IEEE 9-bus and 39-bus systems. The nonlinear optimization was performed using the interior point method in IPOPT \cite{wachter06} on a PC laptop with an Intel Core I7 3.3 GHz CPU and 16GB of memory. In Figure \ref{fig_9and39}, we show a graphical representation of the computed maximum bound on the magnitude of the disturbance that can enter at every single bus. The results suggest that the bigger disturbances are allowed to enter at the buses with many neighbors to distribute the impact. For the generator nodes, the second order dynamics together with the governor reduce the damping ratio, and only small disturbances are admissible.

\begin{figure}[t]
	\centering
	\includegraphics[width=3.2in]{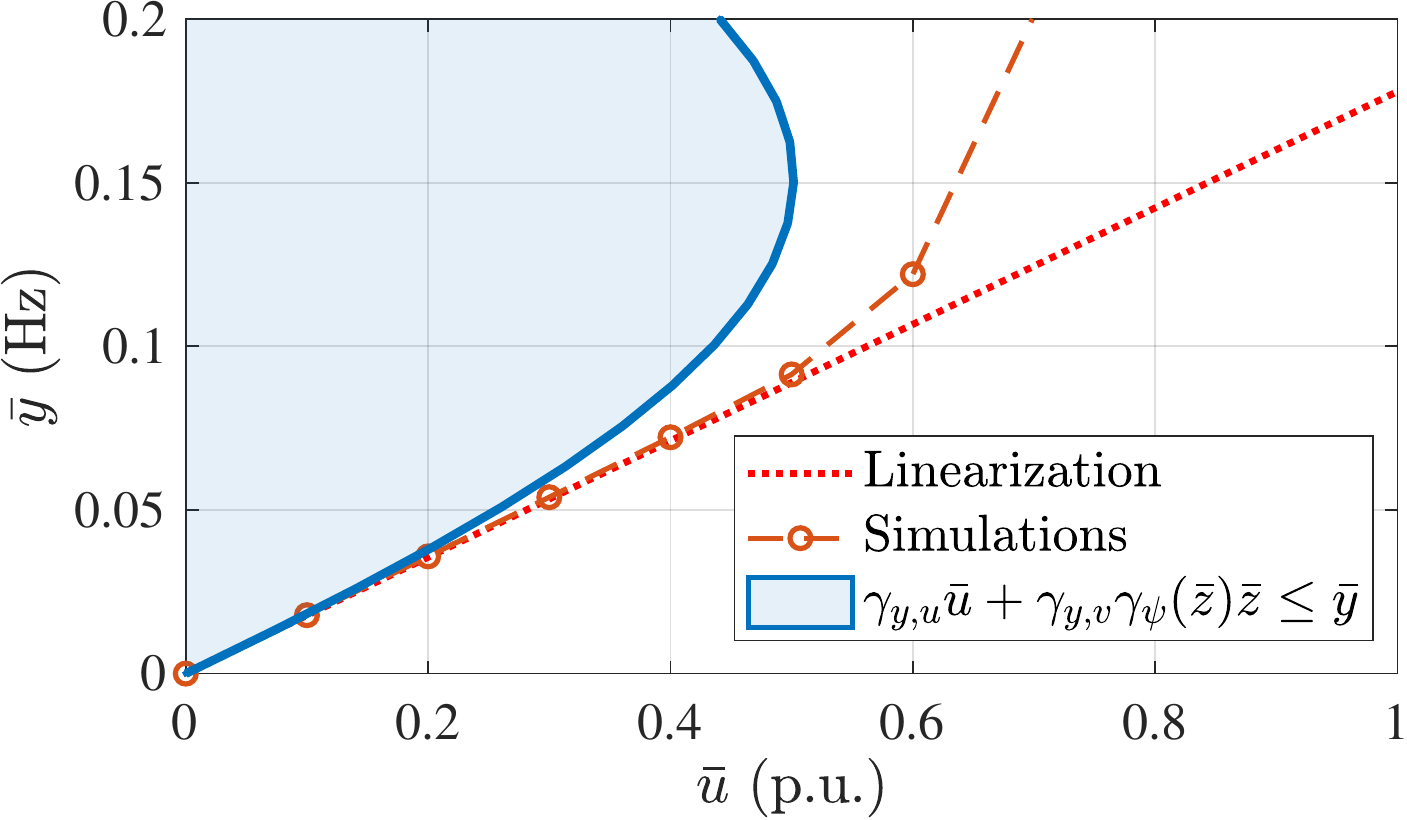}
	\caption{Maximum frequency deviation for a SMIB system.}
	\label{fig_max_freq_2bus}
\end{figure}

Regarding the computation time, for the 9-bus system the gain matrix took 1.86 seconds to compute, while the optimization took 0.017 seconds. For the 39-bus system, the computation time for the gain matrix was 166.9 seconds, while optimization time was 0.148 seconds. Therefore, the most computationally intense step in our method is the computation of gain matrix of the linear component, which requires simulation of an impulse response and numerical integration. However, the computation time of the gain matrix could be improved by estimating only an upper-bound, rather than its exact value \cite{balakrishnan92}.


For the 39-bus case study, we consider the following disturbance scenarios: a step disturbance to represent the simultaneous tripping of distributed generators, and a continuous disturbances to represent the varying power output from wind generation.


\subsubsection{Simultaneous Distributed Generators tripping}
In this scenario, we consider the simultaneous tripping of the loads at the buses 3, 15 and 27. The active power loads at those buses are 3.22 p.u., 3.2 p.u., and 2.81 p.u., respectively. Under the frequency constraint of 0.5 Hz, the maximum tripped load magnitude needs to be less than 0.939 p.u. Without the frequency constraint, the maximum disturbance magnitude allowed at each load is 2.29 p.u..

\subsubsection{Wind generation}
In this scenario, we consider the varying power output from wind generation at the buses 1, 9 and 16. Under the frequency constraint of 0.5 Hz, the deviation from the nominal generation needs to be less than 1.305 p.u. Without the frequency constraint, a deviation in the active power of 2.02 p.u. is allowed at each wind generator.

\section{Conclusion}
 Conventionally, operational constraints on the frequency deviation are not considered in the study of transient stability. The formulation presented in this paper offers a simple way to unify these considerations: transient stability and frequency constraints, and makes use of well-developed and efficient optimization methods to perform stability assessment. The input-output stability analysis provides a novel and practical solution to quickly identify the disturbances that the electric power grid can withstand, while never violating some imposed frequency constraints. The numerical study shows that our technique is not conservative, and can include a wide range of disturbances.

{\color{black} As future work, our results could be extended to consider additional features for the disturbance, such as ramping rate bound and duration. While our approach can deal with a very general class of disturbances, bounded only in magnitude, practical disturbances may come from a much more restricted class, with known characteristics. By exploiting such additional knowledge about the nature of disturbance, the methodology proposed here can be adapted to other specific applications. A different research direction could focus on designing robust controllers that would increase the magnitude of the admissible disturbances.}

\begin{figure}[t]
	\centering
	\includegraphics[width=3.2in]{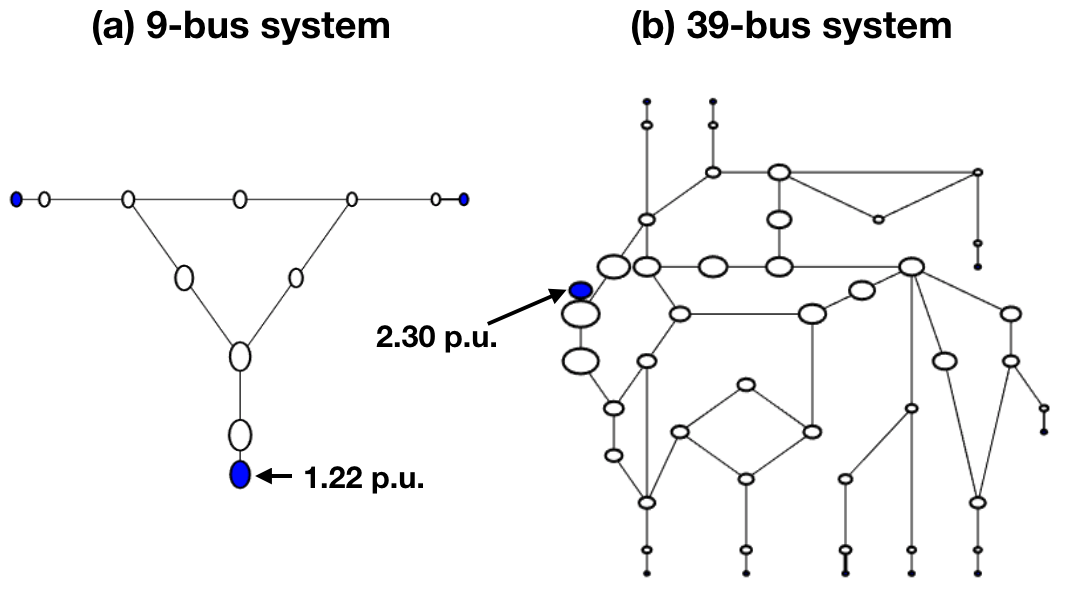}
	\caption{Maximum disturbance bound at every bus for the 9-bus and 39-bus systems. A disturbance on every individual node is considered and the resulting maximum bound is represented as the size of circle at that node. For both systems, a reference circle is labeled with its value.}
	\label{fig_9and39}
\end{figure}
\begin{figure}[t]
	\centering
	\includegraphics[width=3.6in]{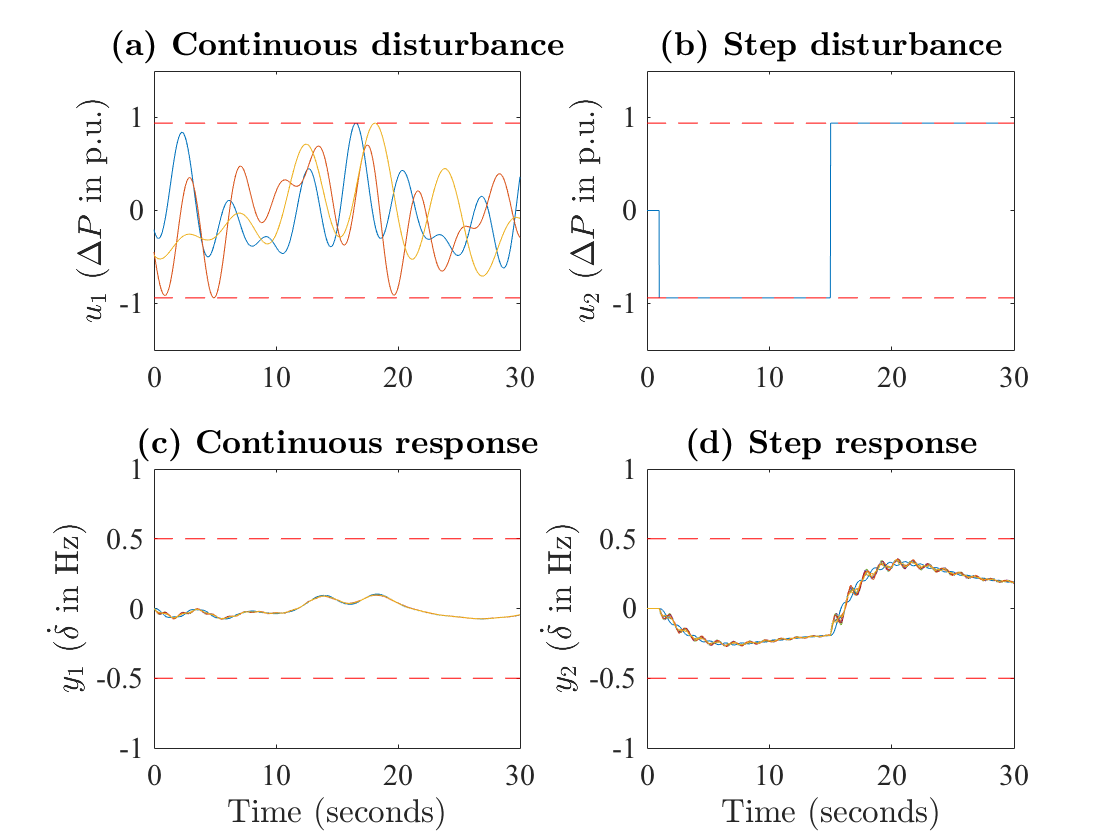}
	\caption{Simulation results for (a)varying wind generation, and (b) simultaneous generation tripping, together with their frequency response for the 39-bus system.}
	\label{fig_39bus}
\end{figure}


\ifCLASSOPTIONcaptionsoff
\newpage
\fi

\bibliographystyle{IEEEtran}
\bibliography{references}
\nocite{*}

\end{document}